\title[Infinite generation for right-angled buildings]{Infinite generation of non-cocompact lattices \\ on right-angled buildings}
\author[A Thomas]{Anne Thomas}
\address{School of Mathematics and Statistics \\University of Sydney}
\email{athomas@maths.usyd.edu.au}
\urladdr{}
\author[K Wortman]{Kevin Wortman}
\address{Department of Mathematics\\ University of Utah}
\email{wortman@math.utah.edu}
\urladdr{}
\newtheorem{theorem}{Theorem}
\newtheorem{lemma}{Lemma}          
\newtheorem{proposition}{Proposition}
\newtheorem{corollary}{Corollary}
\let\c@lemma=\c@theorem
\let\c@proposition=\c@theorem
\let\c@corollary=\c@theorem
\def\T{{\mathcal T}}
\def\rperp{\langle r^\perp \rangle}
\def\sperp{\langle s^\perp \rangle}
\def\tperp{\langle t^\perp \rangle}
\def\siperp{\langle s_i^\perp \rangle}
\DeclareMathOperator\Aut{Aut}
\DeclareMathOperator\Stab{Stab}
\DeclareMathOperator\Ch{Ch}
\def\polhk#1{\setbox0=\hbox{#1}{\ooalign{\hidewidth
    \lower1.0ex\hbox{$\,\lhook$}\hidewidth\crcr\unhbox0}}}
\newcommand{\Swiatkowski}{\'Swi{\polhk{a}}tkowski}
\begin{document}

\begin{abstract}    % type your abstract below

Let $\Gamma$ be a non-cocompact lattice on a locally finite regular right-angled building $X$.  We prove that if $\Gamma$ has a strict fundamental domain then $\Gamma$ is not finitely generated.  We use the separation properties of subcomplexes of $X$ called tree-walls.

\end{abstract}

\maketitle

%%%%%%%%%%%%%%%%%%%%   Start of main body of article

 Tree lattices have been well-studied (see~\cite{BL}).  Less understood are lattices on higher-dimensional CAT(0) complexes.  In this paper, we consider lattices on $X$ a locally finite, regular right-angled building (see Davis \cite{D} and \fullref{s:rabs} below).  Examples of such $X$ include products of locally finite regular or biregular trees, or Bourdon's building $I_{p,q}$  \cite{B}, which has apartments hyperbolic planes tesselated by right-angled $p$--gons and all vertex links the complete bipartite graph $K_{q,q}$.

Let $G$ be a closed, cocompact group of type-preserving automorphisms of $X$, equipped with the compact-open topology, and let $\Gamma$ be a lattice in $G$.  That is, $\Gamma$ is discrete and the series $\sum |\Stab_\Gamma(\phi)|^{-1}$
converges, where the sum is over the set of chambers $\phi$ of a fundamental domain for $\Gamma$.  The lattice $\Gamma$ is cocompact in $G$ if and only if the quotient $\Gamma \backslash X$ is compact.

If there is a subcomplex $Y \subset X$ containing exactly one point from each $\Gamma$--orbit on $X$, then $Y$ is called a \emph{strict fundamental domain} for $\Gamma$.  Equivalently, $\Gamma$ has a strict fundamental domain if $\Gamma \backslash X$ may be embedded in $X$.

Any cocompact lattice in $G$ is finitely generated.  We prove:

\begin{theorem}\label{t:strict implies not fg}  Let $\Gamma$ be a non-cocompact lattice in $G$.  If $\Gamma$ has a strict fundamental domain, then $\Gamma$ is \emph{not} finitely generated.
\end{theorem}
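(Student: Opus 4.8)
The plan is to reduce finite generation to a question about how far the $\Gamma$--orbit of a finite piece of the fundamental domain can spread inside $X$, and then to block that spreading with a single well-chosen tree-wall. First I will record the reduction. Since $\Gamma$ is a lattice on the locally finite complex $X$, all cell stabilizers are finite, and since $Y$ is a strict fundamental domain, $Y$ contains exactly one cell from each $\Gamma$--orbit of cells and $\Gamma_\sigma$ fixes $\sigma$ pointwise. I will use the standard fact that a group acting on a locally finite connected graph with finite cell stabilizers is finitely generated if and only if it preserves a connected subgraph on which it acts cocompactly (for the nontrivial direction, take the $\Gamma$--orbit of a choice of edge-paths in $X^{(1)}$ joining a base vertex to its images under a finite generating set). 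So I assume toward a contradiction that $\Gamma$ preserves a connected cocompact subcomplex $\mathcal Z$. Because $Y$ is a strict fundamental domain, $\mathcal Z = \Gamma F$ where $F := \mathcal Z\cap Y$ is a \emph{finite} subcomplex of $Y$ (one cell per orbit, finitely many orbits by cocompactness). Thus it suffices to prove: for every finite subcomplex $F\subseteq Y$, the subcomplex $\Gamma F = \bigcup_{\gamma\in\Gamma}\gamma F$ of $X$ is disconnected.

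Next I will produce the tree-wall. Since $\Gamma$ is non-cocompact, $Y\cong\Gamma\backslash X$ is non-compact, and from this I will deduce that $\Gamma$ has infinitely many orbits of tree-walls. On the other hand, by local finiteness only finitely many tree-walls meet the finite complex $F$, so only finitely many $\Gamma$--orbits of tree-walls contain a tree-wall meeting $F$. I choose a tree-wall $\T$ lying in none of those orbits; then no translate $\gamma F$ meets $\T$, i.e.\ $\Gamma F\cap\T=\emptyset$.

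Now I finish the disconnection. By the separation property of tree-walls, $X\setminus\T$ is the disjoint union of its open wings, and these are its connected components; since $\Gamma F$ misses $\T$, each piece $\Gamma F\cap D$ (one per wing $D$) is open and closed in $\Gamma F$. It remains to rule out $\Gamma F$ being contained in a single wing $D$. If it were, the combinatorial convex hull $C:=\mathrm{conv}(\Gamma F)$ would be a nonempty $\Gamma$--invariant convex subcomplex contained in the closed wing $\overline D$; since $\T$ separates $X$ we have $\overline D\neq X$, so $C$ would be a \emph{proper} $\Gamma$--invariant convex subcomplex. But a lattice on $X$ preserves no nonempty proper convex subcomplex (equivalently, no $\Gamma$--orbit is confined to one side of a tree-wall) --- this again uses the separation behaviour of tree-walls together with the finite-covolume hypothesis. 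This contradiction shows $\Gamma F$ is disconnected, proving the theorem.

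The hard part is the last step: turning the mere existence of a far separating tree-wall into an honest disconnection of $\Gamma F$, which requires knowing that the lattice $\Gamma$ cannot be trapped inside a wing (i.e.\ inside a proper convex subcomplex). A secondary point to be checked carefully is that non-cocompactness really does force infinitely many $\Gamma$--orbits of tree-walls, so that the orbit of $\T$ can be kept away from the finite set $F$; both of these are where the geometry of tree-walls in right-angled buildings does the essential work.
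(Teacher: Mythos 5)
Your overall architecture is close to the paper's: reduce finite generation to connectedness of a $\Gamma$--invariant cocompact subcomplex, then disconnect that subcomplex with a tree-wall. But the two steps you yourself flag as ``the hard part'' are left unproved, and they are exactly where the content of the theorem lives. The main gap is the assertion that \emph{a lattice on $X$ preserves no nonempty proper convex subcomplex}, which you use to rule out $\Gamma F$ lying in a single wing of $\T$. This is not a known black box you can cite; it is a nontrivial statement whose natural proofs require either (i) a covolume-divergence argument for the chambers in the excluded wings (together with auxiliary facts you also do not establish: that closed wings are gallery-convex, that there is a $\Gamma$--equivariant projection to a convex subcomplex under which stabilizers do not grow), or (ii) the production of an explicit element of $\Gamma$ carrying $\phi_0$ across a tree-wall. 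The paper does (ii) concretely: because $\Gamma$ is a \emph{non-cocompact lattice}, the orders $|\Stab_\Gamma(\phi)|$ for $\phi \in Y$ are unbounded, so the tree-wall $\T_n$ can be chosen together with adjacent chambers $\phi_n,\phi_n'\in Y$ across it satisfying $|\Stab_\Gamma(\phi_n)|<|\Stab_\Gamma(\phi_n')|$; any $\gamma\in\Stab_\Gamma(\phi_n')\setminus\Stab_\Gamma(\phi_n)$ fixes the panel $\phi_n\cap\phi_n'$, hence stabilizes $\T_n$, and moves $\phi_n$ to a different side of $\T_n$, whence $\gamma\phi_0$ and $\phi_0$ lie on opposite sides. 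Your proposal never engages with the stabilizer orders, which is the only place non-cocompactness (as opposed to mere non-compactness of $Y$) enters; without some version of this input the claim that $\Gamma F$ meets two wings does not follow.

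The secondary gap is the claim that non-cocompactness forces infinitely many $\Gamma$--orbits of tree-walls, which you need in order to find $\T$ with $\Gamma F\cap\T=\emptyset$. Non-compactness of $Y$ does give infinitely many $\Gamma$--orbits of panels, but by \fullref{c:tree wall trichotomy} a single tree-wall can be infinite, and the number of $\Gamma$--orbits of panels carried by one $\Gamma$--orbit of tree-walls equals the number of $\Stab_\Gamma(\T)$--orbits of panels of $\T$, which can be infinite when $\Stab_\Gamma(\T)$ fails to act cocompactly on $\T$. So the pigeonhole you want does not immediately go through, and I do not see an easy proof. Note that the paper sidesteps this entirely: it never needs a tree-wall disjoint from the whole orbit $\Gamma F$, only a tree-wall $\T_n$ meeting $Y$ all of whose epicormic chambers lie at gallery distance greater than $n$ from $\phi_0$ (which follows from local finiteness plus the fact that infinitely many tree-walls meet $Y$), and then uses the projection to the strict fundamental domain (\fullref{l:projection preserves adjacency}, \fullref{l:epiy}) to show that the component of $D(n)$ containing $\phi_0$ cannot reach $\T_n$. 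I would recommend reworking your argument around an explicit wall-crossing element of $\Gamma$ obtained from unbounded stabilizers, rather than around the minimality and orbit-counting claims.
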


We note that \fullref{t:strict implies not fg} contrasts with the finite generation of lattices on many buildings whose chambers are simplices.  Results of, for example, Ballmann--\Swiatkowski\ \cite{BS},  Dymara--Januszkiewicz \cite{DJ}, and Zuk \cite{Z}, establish that all lattices on many such buildings have Kazhdan's Property (T).  Hence by a well-known result due to Kazhdan \cite{K}, these lattices are finitely generated.

Our proof of \fullref{t:strict implies not fg}, in \fullref{s:proof} below,
uses the separation properties of subcomplexes of $X$ which we call \emph{tree-walls}.  These generalize the tree-walls (in French, \emph{arbre-murs}) of $I_{p,q}$, which were introduced by Bourdon in  \cite{B}.  We define  tree-walls and establish their properties in \fullref{s:tree-walls} below.

The following examples of non-cocompact lattices on right-angled buildings are known to us.
\begin{enumerate}
\item For $i = 1,2$, let $G_i$ be a rank one Lie group over a nonarchimedean locally compact field whose Bruhat--Tits building is the locally finite regular or biregular tree $T_i$.  Then any irreducible lattice in $G = G_1 \times G_2$ is finitely generated (Raghunathan \cite{Ra}).  Hence by \fullref{t:strict implies not fg} above, such lattices on $X = T_1 \times T_2$ cannot have strict fundamental domain.
\item Let $\Lambda$ be a minimal Kac--Moody group over a finite field $\mathbb{F}_q$ with right-angled Weyl group $W$.  Then $\Lambda$ has locally finite, regular right-angled twin buildings $X_+ \cong X_-$, and $\Lambda$ acts diagonally on the product $X_+ \times X_-$.  For $q$ large enough:
\begin{enumerate}
\item
By Theorem  0.2 of Carbone--Garland \cite{CG} or Theorem 1(i) of R\'emy \cite{R}, the stabilizer in $\Lambda$ of a point in $X_-$ is a non-cocompact lattice in $\Aut(X_+)$.  Any such lattice is contained in a negative maximal spherical parabolic subgroup of $\Lambda$, which has strict fundamental domain a sector in $X_+$, and so any such lattice has strict fundamental domain.  \item By Theorem 1(ii) of R\'emy \cite{R}, the group $\Lambda$ is itself a non-cocompact lattice in $\Aut(X_+) \times \Aut(X_-)$.  Since $\Lambda$ is finitely generated, \fullref{t:strict implies not fg} above implies that $\Lambda$ does not have  strict fundamental domain in $X = X_+ \times X_-$.
\item By Section 7.3 of Gramlich--Horn--M\"uhlherr \cite{GHM}, the fixed set $G_\theta$ of certain involutions $\theta$ of $\Lambda$ is a lattice in $\Aut(X_+)$, which is sometimes cocompact and sometimes non-cocompact.  Moreover, by \cite[Remark 7.13]{GHM}, there exists $\theta$ such that $G_\theta$ is not finitely generated.
\end{enumerate}
\item In \cite{T}, the first author constructed a functor from graphs of groups to complexes of groups, which extends the corresponding tree lattice to a lattice in $\Aut(X)$ where $X$ is a regular right-angled building.  The resulting lattice in $\Aut(X)$ has strict fundamental domain if and only if the original tree lattice has strict fundamental domain.
\end{enumerate}

\subsection*{Acknowledgements}

The first author was supported in part by NSF Grant No. DMS-0805206 and in part by EPSRC Grant No. EP/D073626/2, and is currently supported by ARC Grant No. DP110100440.  The second author is supported in part by NSF Grant No. DMS-0905891.  
We thank Martin Bridson and Pierre-Emmanuel Caprace for helpful conversations.

\section{Right-angled buildings}\label{s:rabs}

In this section we recall the basic definitions and some examples for right-angled buildings.  We mostly follow Davis \cite{D}, in particular Section 12.2 and Example 18.1.10.  See also \cite[Sections 1.2--1.4]{KT}.

Let $(W,S)$ be a right-angled Coxeter system.  That is, $$W = \langle S \mid (st)^{m_{st}} = 1\rangle$$ where $m_{ss} = 1$ for all $s \in S$, and $m_{st} \in \{ 2,\infty \}$ for all $s, t \in S$ with $s \neq t$.
We will discuss the following examples:
\begin{itemize}
\item $W_1 = \langle s, t \mid s^2 = t^2 = 1\rangle \cong D_\infty$, the infinite dihedral group;
\item $W_2 = \langle r, s, t \mid r^2 = s^2 = t^2 = (rs)^2 = 1  \rangle \cong (C_2 \times C_2)* C_2$, where $C_2$ is the cyclic group of order $2$; and
\item The Coxeter group $W_3$ generated by the set of reflections $S$ in the sides of a right-angled hyperbolic $p$--gon, $p \geq 5$.  That is, \[ W_3 = \langle s_1, \ldots, s_p \mid s_i^2 = (s_i s_{i+1})^2 = 1  \rangle \] with cyclic indexing.
\end{itemize}

Fix $(q_s)_{s \in S}$ a family of integers with $q_s \geq 2$.  Given any family of groups $(H_s)_{s \in S}$ with $|H_s| = q_s$, let $H$ be the quotient of the free product of the $(H_s)_{s \in S}$ by the normal subgroup generated by the commutators $\left\{ [h_s,h_t] : h_s \in H_s, h_t \in H_t, m_{st} = 2\right\}$.  

Now let $X$ be the piecewise Euclidean CAT(0) geometric realization of the chamber system $\Phi=\Phi\left(H,\{1\},(H_s)_{s \in S}\right)$.  Then $X$ is a locally finite, regular right-angled building, with chamber set $\Ch(X)$ in bijection with the elements of the group $H$.  
Let $\delta_W\co\Ch(X) \times \Ch(X) \to W$ be the $W$--valued distance function and let $l_S\co W \to \mathbb{N}$ be word length with respect to the generating set $S$.   Denote by $d_W\co \Ch(X) \times \Ch(X) \to \mathbb{N}$ the \emph{gallery distance} $l_S \circ \delta_W$.  That is, for two chambers $\phi$ and $\phi'$ of $X$, $d_W(\phi,\phi')$ is the length of a minimal gallery from  $\phi$ to $\phi'$.

Suppose that $\phi$ and $\phi'$ are $s$--adjacent chambers, for some $s \in S$.  That is, $\delta_W(\phi,\phi') = s$.  The intersection $\phi \cap \phi'$ is called an \emph{$s$--panel}.  By definition, since $X$ is regular, each $s$--panel is contained in $q_s$ distinct chambers.  For distinct $s, t \in S$, the $s$--panel and $t$--panel of any chamber $\phi$ of $X$ have nonempty intersection if and only if $m_{st} = 2$.  Each $s$--panel of $X$ is reduced to a vertex if and only if $m_{st} = \infty$ for all $t \in S - \{ s \}$.

For the examples $W_1$, $W_2$, and $W_3$ above, respectively:
\begin{itemize}
\item The building $X_1$ is a tree with each chamber an edge, each $s$--panel a vertex of valence $q_s$, and each $t$--panel a vertex of valence $q_t$.  That is, $X_1$ is the $(q_s,q_t)$--biregular tree.  The apartments of $X_1$ are bi-infinite rays in this tree.
\item The building $X_2$ has chambers and apartments as shown in \fullref{f:RAB_example2} below.  The $r$-- and $s$--panels are $1$--dimensional and the $t$--panels are vertices.
\begin{figure}[ht]
\begin{center}
\scalebox{0.5}{\includegraphics{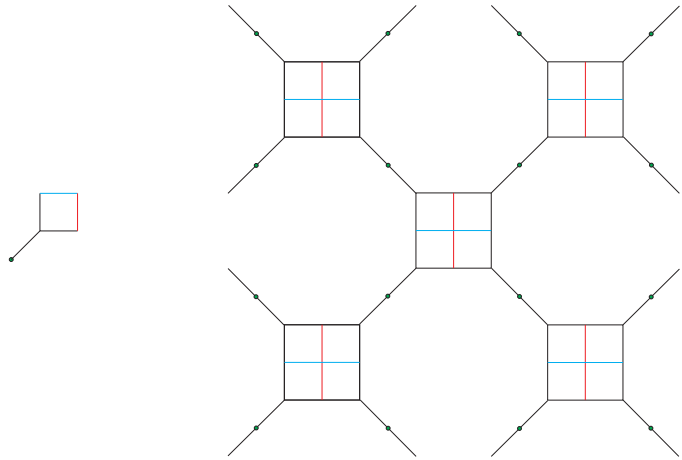}}
\caption{A chamber (on the left) and part of an apartment (on the right) for the building $X_2$.}
\label{f:RAB_example2}
\end{center}
\end{figure}

\item The building $X_3$ has chambers $p$--gons and $s$--panels the edges of these $p$--gons.  If $q_s = q \geq 2$ for all $s \in S$, then each $s$--panel is contained in $q$ chambers, and $X_3$, equipped with the obvious piecewise hyperbolic metric, is Bourdon's building $I_{p,q}$.
\end{itemize}

\section{Tree-walls}\label{s:tree-walls}

We now generalize the notion of tree-wall due to Bourdon \cite{B}.  We will use basic facts about buildings, found in, for example, Davis \cite{D}.  Our main results concerning tree-walls are \fullref{c:tree wall trichotomy} below, which describes three possibilities for tree-walls, and \fullref{p:tree-wall} below, which generalizes the separation property 2.4.A(ii) of \cite{B}.

Let $X$ be as in \fullref{s:rabs} above and let $s \in S$.  As in \cite[Section 2.4.A]{B}, we define two $s$--panels of $X$ to be \emph{equivalent} if they are contained in a common wall of type $s$ in some apartment of $X$.  A \emph{tree-wall of type $s$} is then an equivalence class under this relation.  
We note that in order for walls and thus tree-walls to have a well-defined type, it is necessary only that all finite $m_{st}$, for $s \neq t$, be even.  Tree-walls could thus be defined for buildings of type any even Coxeter system, and they would have properties similar to those below.  We will however only explicitly consider the right-angled case.

Let $\T$ be a tree-wall of $X$, of type $s$.  We define a chamber $\phi$ of $X$ to be \emph{epicormic at $\T$} if the $s$--panel of $\phi$ is contained in $\T$, and we say that a gallery $\alpha = (\phi_0, \ldots, \phi_n)$ \emph{crosses $\T$} if, for some $0 \leq i < n$, the chambers $\phi_i$ and $\phi_{i + 1}$ are epicormic at $\T$.  

By the definition of tree-wall, if $\phi \in \Ch(X)$ is epicormic at $\T$ and $\phi' \in \Ch(X)$ is $t$--adjacent to $\phi$ with $t \neq s$, then $\phi'$ is epicormic at $\T$ if and only if $m_{st} = 2$.
Let $s^\perp := \{ t \in S \mid m_{st} = 2\}$ and denote by $\sperp$ the subgroup of $W$ generated by the elements of $s^\perp$.    If $s^\perp$ is empty then by convention, $\sperp$ is trivial.  For the examples in \fullref{s:rabs} above:
\begin{itemize}
\item in $W_1$, both $\sperp$ and $\tperp$ are trivial;
\item in $W_2$, $\rperp = \langle s \rangle \cong C_2$ and $\sperp = \langle r \rangle \cong C_2$, while $\tperp$ is trivial; and
\item in $W_3$, $\siperp = \langle s_{i-1}, s_{i+1} \rangle \cong D_\infty$ for each $1 \leq i \leq p$.
\end{itemize}

\begin{lemma}\label{l:sperp epicormic}  Let $\T$ be a tree-wall of $X$ of type $s$.
 Let $\phi$ be a chamber which is epicormic at $\T$ and let $A$ be any apartment containing $\phi$.
\begin{enumerate}
\item\label{i:wall separates} The intersection $\T \cap A$ is a wall of $A$, hence separates $A$.
\item\label{i:word in s perp} There is a bijection between the elements of the group $\sperp$ and the set of chambers of $A$ which are epicormic at $\T$ and in the same component of $A  - \T \cap A$ as $\phi$.
\end{enumerate}
\end{lemma}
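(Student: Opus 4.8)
The plan is to work entirely inside the apartment $A$, where a right-angled Coxeter complex is available, and to transport the ``epicormic'' and ``tree-wall'' language into the familiar language of walls and reflections in $(W,S)$.

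First I would prove part~\eqref{i:wall separates}. Since $\phi$ is epicormic at $\T$, its $s$--panel $P$ lies in $\T$, and $P$ is a panel of the chamber $\phi$ of $A$. A panel of a chamber in a Coxeter complex lies on a unique wall $M$ of $A$, namely the wall fixed by the reflection $s$ read in the residue; this $M$ has type $s$. By the very definition of the equivalence relation defining $\T$ (two $s$--panels are equivalent if they lie in a common wall of type $s$ in some apartment), every $s$--panel of $A$ lying on $M$ is in $\T$, and conversely any $s$--panel of $A$ that is in $\T$ is equivalent to $P$, hence — being in the \emph{same apartment} $A$ as $P$ — must lie on the same wall $M$ (two distinct walls of type $s$ in a single Coxeter complex $A$ cannot share a panel, as a panel determines its wall). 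Thus $\T \cap A = M$ is a wall of $A$, and walls separate the Coxeter complex $A$ into two half-apartments; this is a standard fact about Coxeter complexes.

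Next, part~\eqref{i:word in s perp}. Let $M = \T \cap A$ and let $A^+$ be the component of $A - M$ containing $\phi$. I claim the chambers of $A^+$ epicormic at $\T$ are exactly the chambers $\psi$ of $A^+$ whose $s$--panel lies on $M$, i.e.\ which have a panel on $M$ but lie on the $+$ side. Write $\delta_W(\phi,\psi) = w$; then $\psi$ has its $s$--panel on $M$ and lies on the same side as $\phi$ if and only if both $\psi$ and the chamber $\psi'$ obtained by reflecting $\psi$ across $M$ are at equal-length-parity... more precisely, using the $W$--metric: the chamber adjacent to $\psi$ across its $s$--panel is $\psi s$, and $\psi$ is epicormic at $\T$ with $\psi, \psi s$ on opposite sides of $M$; a gallery from $\phi$ to $\psi$ crosses $M$ (equivalently passes through the reflection $r_M$) an even number of times iff $\psi \in A^+$. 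I would then show that the elements $w = \delta_W(\phi,\psi)$ arising this way, for $\psi \in A^+$ epicormic at $\T$, are precisely the elements of $\sperp$: indeed $\psi$ is epicormic at $\T$ in $A^+$ iff a minimal gallery from $\phi$ to $\psi$ never crosses $M$ and the final panel is the $s$--panel of $\psi$ lying on $M$; by the characterization noted just before the lemma ($t$--adjacency preserves being epicormic at $\T$ exactly when $m_{st}=2$), moving from one epicormic chamber in $A^+$ to an adjacent one within $A^+$ is exactly $t$--adjacency with $t \in s^\perp$. Hence the set of such $w$ is closed under right-multiplication by generators in $s^\perp$ and is contained in $\langle s^\perp\rangle$; conversely every $w \in \sperp$ is realized, since one can build the corresponding gallery step by step inside $A^+$, each step legal because $m_{st}=2$ keeps us epicormic and on the same side. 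Since $A$ is a Coxeter complex, $\psi \mapsto \delta_W(\phi,\psi)$ is a bijection from $\Ch(A)$ to $W$, so restricting it gives the desired bijection between $\sperp$ and the epicormic chambers of $A^+$.

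The main obstacle I anticipate is the bookkeeping in step~\eqref{i:word in s perp}: one must verify cleanly that ``$\psi$ epicormic at $\T$ and on the $\phi$--side of $M$'' is equivalent to ``$\delta_W(\phi,\psi) \in \langle s^\perp\rangle$,'' which requires care in relating crossing $M$ (a geometric/combinatorial condition in $A$) to the reduced-word structure of $w$ in $(W,S)$ — in particular, knowing that a reduced expression for such a $w$ uses only letters from $s^\perp$, and that appending such letters never forces a crossing of $M$. The right-angled hypothesis makes this transparent: the letters of $s^\perp$ commute with $s$, and $\langle s^\perp \rangle$ is exactly the ``parallel'' parabolic, so galleries labelled by words in $s^\perp$ stay on the wall $M$'s incidence set and on one fixed side. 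Everything else is standard Coxeter-complex geometry (walls separate, panels determine walls, the $W$--distance is a bijection onto $W$), which I would cite from Davis~\cite{D}.
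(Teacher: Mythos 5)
Your treatment of part~\eqref{i:wall separates} and of the ``easy'' direction of part~\eqref{i:word in s perp} matches the paper's: for $w \in \sperp$ with reduced expression $s_1\cdots s_n$, the gallery of that type from $\phi$ stays epicormic (since each $m_{s_i s}=2$) and never crosses $\T\cap A$ (since no $s_i$ equals $s$), and injectivity is free because $\psi\mapsto\delta_W(\phi,\psi)$ is a bijection $\Ch(A)\to W$. The gap is in the converse, i.e.\ surjectivity: you need that \emph{every} chamber $\psi$ of $A$ that is epicormic at $\T$ and on the $\phi$--side satisfies $\delta_W(\phi,\psi)\in\sperp$. What you actually establish is only that the set of such $w$ is closed under right multiplication by generators in $s^\perp$, and that two \emph{adjacent} epicormic chambers on the same side are $t$--adjacent with $t\in s^\perp$. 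Neither of these yields containment in $\sperp$: for that you would need to know that the epicormic chambers on the $\phi$--side form a set that is gallery-connected through each other, which is essentially the statement being proved. Your appeal to ``a reduced expression for such a $w$ uses only letters from $s^\perp$'' names exactly the missing claim without proving it, and your intermediate characterization (``the final panel of a minimal gallery from $\phi$ to $\psi$ is the $s$--panel of $\psi$'') is not correct — the last panel of such a gallery is an $s_n$--panel with $s_n\in s^\perp$, not the $s$--panel.

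The paper closes this gap by induction on $l_S(w)$: taking a minimal gallery of type $(s_1,\dots,s_n)$ from $\phi$ to $\psi$, it shows the last letter $s_n$ lies in $s^\perp$ by a wall-intersection argument — the wall $\T_n\cap A$ of type $s_n$ crossed by the last step separates $\phi$ from $\psi$, hence separates their $s$--panels; since both $s$--panels lie in the single wall $\T\cap A$, the two walls must intersect, forcing $m_{s_n s}=2$. Then $\psi'=\psi_{ws_n}$ is again epicormic and on the same side, and induction applies. Some such geometric input (or, equivalently, the standard computation that the centralizer of $s$ in a right-angled Coxeter group is $\langle s\rangle\times\sperp$) is indispensable; without it your argument only proves one inclusion.
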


\begin{proof}
 Part \eqref{i:wall separates} is immediate from the definition of tree-wall.  For Part \eqref{i:word in s perp}, let $w \in \sperp$ and let $\psi=\psi_w$ be the unique chamber of $A$ such that $\delta_W(\phi,\psi) = w$.  We claim that $\psi$ is epicormic at $\T$ and in the same component of $A - \T \cap A$ as $\phi$.

For this, let $s_1 \cdots s_n$ be a reduced expression for $w$ and let $\alpha = (\phi_0,\ldots,\phi_n)$ be the minimal gallery from $\phi = \phi_0$ to $\psi=\phi_n$ of type $(s_1,\ldots,s_n)$.  Since $w$ is in $\sperp$, we have $m_{s_i s} = 2$ for $1 \leq i \leq n$. Hence by induction each $\phi_i$ is epicormic at $\T$, and so $\psi = \phi_n$ is epicormic at $\T$.  Moreover, since none of the $s_i$ are equal to $s$, the gallery $\alpha$ does not cross $\T$.  Thus $\psi=\psi_w$ is in the same component of $A - \T \cap A$ as $\phi$.  

It follows that $w \mapsto \psi_w$ is a well-defined, injective map from $\sperp$ to the set of chambers of $A$ which are epicormic at $\T$ and in the same component of $A - \T \cap A$ as $\phi$.  To complete the proof, we will show that this map is surjective.  So let $\psi$ be a chamber of $A$ which is epicormic at $\T$ and in the same component of $A - \T \cap A$ as $\phi$, and let $w = \delta_W(\phi,\psi)$.

If $\sperp$ is trivial then $\psi = \phi$ and $w = 1$, and we are done.  Next suppose that the chambers $\phi$ and $\psi$ are $t$--adjacent, for some $t \in S$.  Since both $\phi$ and $\psi$ are epicormic at $\T$, either $t = s$ or $m_{st} = 2$.  But $\psi$ is in the same component of $A - \T \cap A$ as $\phi$, so $t \neq s$, hence $w = t$ is in $\sperp$ as required.  If $\sperp$ is finite, then finitely many applications of this argument will finish the proof.  If $\sperp$ is infinite, we have established the base case of an induction on $n=l_S(w)$.  

For the inductive step, let $s_1 \cdots s_n$ be a reduced expression for $w$  and let $\alpha = (\phi_0,\ldots,\phi_n)$ be the minimal gallery from $\phi = \phi_0$ to $\psi=\phi_n$ of type $(s_1,\ldots,s_n)$.  Since $\phi$ and $\psi$ are in the same component of $A - \T \cap A$ and $\alpha$ is minimal, the gallery $\alpha$ does not cross $\T$.  
We claim that $s_n$ is in $s^\perp$.  First note that $s_n \neq s$ since $\alpha$ does not cross $\T$ and $\psi = \phi_n$ is epicormic at $\T$.  Now denote by $\T_n$ the tree-wall of $X$ containing the $s_n$--panel $\phi_{n-1} \cap \phi_n$.    Since $\alpha$ is minimal and crosses $\T_n$, the chambers $\phi = \phi_0$ and $\psi = \phi_n$ are separated by the wall $\T_n \cap A$.  Thus the $s$--panel of $\phi$ and the $s$--panel of $\psi$ are separated by $\T_n \cap A$.  As the $s$--panels of both $\phi$ and $\psi$ are in the wall $\T \cap A$, it follows that the walls $\T_n \cap A$ and $\T \cap A$ intersect.  Hence $m_{s_n s} = 2$, as claimed.

Now let $w' = ws_n = s_1 \cdots s_{n-1}$ and let $\psi'$ be the unique chamber of $A$ such that $\delta_W(\phi,\psi') = w'$.  Since $s_n$ is in $s^\perp$ and $\psi'$ is $s_n$--adjacent to $\psi$, the chamber $\psi'$ is epicormic at $\T$ and in the same component of $A - \T \cap A$ as $\phi$.  Moreover $s_1 \cdots s_{n-1}$ is a reduced expression for $w'$, so $l_S(w') = n -1$.  Hence by the inductive assumption, $w'$ is in $\sperp$.  Therefore  $w = w's_n$ is in $\sperp$, which completes the proof.
\end{proof}

\begin{corollary}\label{c:tree wall trichotomy}  The following possibilities for tree-walls in $X$ may occur.
\begin{enumerate}
\item\label{i:reduced to vertex} Every tree-wall of type $s$ is reduced to a vertex if and only if $\sperp$ is trivial.  
\item\label{i:finite} Every tree-wall of type $s$ is finite but not reduced to a vertex if and only if $\sperp$ is finite but nontrivial. 
\item\label{i:infinite} Every tree-wall of type $s$ is infinite if and only if $\sperp$ is infinite. 
\end{enumerate}
\end{corollary}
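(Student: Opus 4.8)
The plan is to prove the stronger assertion that \emph{each} tree-wall $\T$ of type $s$ is itself reduced to a vertex, finite but not reduced to a vertex, or infinite, according as $\sperp$ is trivial, finite but nontrivial, or infinite. Since the three conditions on $\sperp$ are mutually exclusive and exhaustive and depend only on $s$, while the three conclusions on a nonempty subcomplex are mutually exclusive and exhaustive, it then suffices to prove the three implications ``$\sperp$ is [trivial / finite but nontrivial / infinite] $\Rightarrow$ $\T$ is [a vertex / finite and not a vertex / infinite]'', after which the equivalences of \fullref{c:tree wall trichotomy} follow formally. Throughout I fix a chamber $\phi_0$ epicormic at $\T$, with $s$--panel $P_0$, and I record two preliminary remarks. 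First, $\T$ is exactly the union of the $s$--panels of the chambers epicormic at $\T$, and each such $s$--panel is the $s$--panel of exactly $q_s$ epicormic chambers. Second, $\sperp$ is trivial if and only if $s^\perp=\emptyset$, in which case every $s$--panel of $X$ is a vertex by \fullref{s:rabs}, whereas if $\sperp$ is nontrivial then $s^\perp\neq\emptyset$, so by \fullref{s:rabs} the $s$--panels are not single vertices, and hence $\T$, which contains an $s$--panel, is not a vertex.

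I would handle the two ``extreme'' cases first. If $\sperp$ is infinite, fix an apartment $A$ containing $\phi_0$. By \fullref{l:sperp epicormic}, $\T\cap A$ is a wall of $A$, of type $s$ since it contains $P_0$, and $A$ contains $|\sperp|$ chambers epicormic at $\T$ on the same side of $\T\cap A$ as $\phi_0$. Distinct chambers of the thin complex $A$ lying on the same side of $\T\cap A$ cannot share an $s$--panel lying on $\T\cap A$ (the two chambers of $A$ containing such a panel lie on opposite sides of the wall), so these infinitely many epicormic chambers have pairwise distinct $s$--panels; hence $\T\supseteq\T\cap A$ has infinitely many $s$--panels and is infinite. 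If instead $\sperp$ is trivial, I must show $\T=P_0$. Let $P$ be any $s$--panel equivalent to $P_0$, choose an apartment $A$ with $P\cup P_0\subset A$ and a chamber $\phi_0'$ of $A$ with $s$--panel $P_0$. By \fullref{l:sperp epicormic} the wall $\T\cap A$ has type $s$ and the only chambers of $A$ epicormic at $\T$ are $\phi_0'$ and its $s$--neighbour in $A$; since every $s$--panel lying on $\T\cap A$ is the $s$--panel of an epicormic chamber of $A$, the wall $\T\cap A$ equals the single panel $P_0$, and $P\subset\T\cap A$ gives $P=P_0$. Thus $\T=P_0$ is a vertex.

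The remaining case, $\sperp$ finite and nontrivial, will be the heart of the argument, and here the key step is the claim that \emph{for every chamber $\psi$ epicormic at $\T$, $\delta_W(\phi_0,\psi)$ lies in $\sperp\cup\sperp s$.} To prove this I would pick an apartment $A$ containing $\phi_0$ and $\psi$; by \fullref{l:sperp epicormic}, $\T\cap A$ is a wall separating $A$. If $\psi$ lies in the same component of $A-\T\cap A$ as $\phi_0$, then \fullref{l:sperp epicormic} gives $\delta_W(\phi_0,\psi)\in\sperp$ directly. Otherwise let $\psi''$ be the chamber of $A$ that is $s$--adjacent to $\psi$; since $\psi$ is epicormic its $s$--panel lies on $\T\cap A$, so $\psi''$ lies on $\phi_0$'s side of $\T\cap A$ and is itself epicormic, whence $w:=\delta_W(\phi_0,\psi'')\in\sperp$. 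No reduced word for $w$ involves $s$, so $l_S(ws)=l_S(w)+1$; since $\delta_W(\phi_0,-)$ is injective on $\Ch(A)$, this forces $\delta_W(\phi_0,\psi)=ws\in\sperp s$. Granting the claim, when $\sperp$ is finite every epicormic chamber $\psi$ satisfies $d_W(\phi_0,\psi)\le 1+\max_{w\in\sperp}l_S(w)$; as $X$ is locally finite, only finitely many chambers lie within this bounded gallery distance of $\phi_0$, so there are finitely many epicormic chambers, finitely many $s$--panels in $\T$, and $\T$ is finite --- and not a vertex by the preliminary remarks.

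I expect the main obstacle to be precisely this last case, namely the claim that epicormic chambers lie at $\delta_W$--distance in $\sperp\cup\sperp s$ from $\phi_0$. The delicate subcase is when $\psi$ lies on the far side of $\T\cap A$: one must pass to its $s$--neighbour $\psi''$ and then argue carefully that $\delta_W(\phi_0,\psi)$ equals $ws$ rather than $w$, which is where the thinness of $A$ and the injectivity of $\delta_W(\phi_0,-)$ on $\Ch(A)$ are used, and where one must be sure that the $s$--panel of $\psi$ genuinely lies on the wall $\T\cap A$ so that $\psi''$ falls on $\phi_0$'s side.
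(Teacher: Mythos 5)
Your argument is correct, and it runs on the same engine as the paper's: part~\eqref{i:word in s perp} of \fullref{l:sperp epicormic}, read as a bijection between $\sperp$ and the $s$--panels of the wall $\T\cap A$. Where you genuinely go beyond the published proof is in the finite case. The paper's proof is a single sentence counting the $s$--panels in $\T\cap A$ for one apartment $A$; that settles case~\eqref{i:reduced to vertex} and the forward direction of case~\eqref{i:infinite}, but it does not by itself bound the whole tree-wall when $\sperp$ is finite and nontrivial, because $\T$ is a union of walls over many apartments and is in general strictly larger than any single $\T\cap A$ (in $X_2$ a tree-wall of type $r$ contains $q_s$ panels while $|\rperp|=2$; in $I_{p,q}$ the tree-walls branch). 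Your claim that every chamber epicormic at $\T$ satisfies $\delta_W(\phi_0,\psi)\in\sperp\cup\sperp s$, combined with local finiteness, supplies exactly the missing uniform bound, and your derivation (pass to the $s$--neighbour across the wall; reduced words for elements of $\sperp$ avoid $s$, so $l_S(ws)=l_S(w)+1$) is sound. The paper's implicit substitute for this step is \fullref{c:inverse image}: the retraction $\rho_{\phi,A}$ preserves $W$--distance from $\phi$ and pulls $\T\cap A$ back to $\T$, which yields the same bound --- but that corollary is stated \emph{after} the trichotomy, so your self-contained version is a legitimate and arguably cleaner repair. One small caveat: in the $\sperp$--trivial case you should take $A$ to be an apartment witnessing the equivalence of $P$ with $P_0$ (or iterate along a chain of such witnesses if the common-wall relation is only taken to generate the equivalence), rather than an arbitrary apartment containing $P\cup P_0$; with part~\eqref{i:wall separates} of \fullref{l:sperp epicormic} in hand this changes nothing of substance.
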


\begin{proof}  Let $\T$, $\phi$, and $A$ be as in \fullref{l:sperp epicormic} above.
 The set of $s$--panels in the wall $\T \cap A$ is in bijection with the set of chambers of $A$ which are epicormic at $\T$ and in the same component of $A - \T \cap A$ as $\phi$.
\end{proof}

\noindent For the examples in \fullref{s:rabs} above:
\begin{itemize}
 \item in $X_1$,  every tree-wall of type $s$ and of type $t$ is a vertex;
 \item in $X_2$, the tree-walls of types both $r$ and $s$ are finite and $1$--dimensional, while every tree-wall of type $t$ is a vertex; and
\item in $X_3$, all tree-walls are infinite, and are $1$--dimensional.
\end{itemize}

\begin{corollary}\label{c:inverse image} Let $\T$, $\phi$, and $A$ be as in \fullref{l:sperp epicormic} above and let
\[ \rho = \rho_{\phi,A}\co X \to A \]
be the retraction onto $A$ centered at $\phi$.  Then $\rho^{-1}(\T \cap A) = \T$.
\end{corollary}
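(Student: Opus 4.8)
My plan is to reduce everything to the behaviour of $\rho = \rho_{\phi,A}$ on individual apartments. I will use two standard facts about retractions: $\rho$ fixes $A$ pointwise, and $\rho$ restricts to a type-preserving isomorphism $\rho|_{A'}\co A' \to A$ on every apartment $A'$ containing $\phi$. The crux of the argument will be the claim that, for every apartment $A'$ containing $\phi$, the isomorphism $\rho|_{A'}$ carries $\T \cap A'$ onto $\T \cap A$.

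First I would prove this claim. Fix an apartment $A'$ containing $\phi$. Since $\phi$ is epicormic at $\T$ and lies in $A'$, \fullref{l:sperp epicormic}\eqref{i:wall separates} tells us that $\T \cap A'$ is a wall of $A'$; it has type $s$ because every panel of $\T$ is an $s$--panel, and it contains the $s$--panel of $\phi$. The same holds with $A'$ replaced by $A$, so $\T \cap A'$ and $\T \cap A$ are the unique walls of type $s$ in $A'$ and in $A$, respectively, containing the $s$--panel of $\phi$. Now $\rho|_{A'}$ is a type-preserving isomorphism of Coxeter complexes fixing $\overline{\phi}$, hence fixing the $s$--panel of $\phi$; such an isomorphism sends walls to walls of the same type, and a wall of a Coxeter complex is determined by any one of the panels it contains. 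Hence $\rho|_{A'}$ sends $\T \cap A'$ to $\T \cap A$, proving the claim.

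Granting the claim, I would finish as follows. For $\T \subseteq \rho^{-1}(\T \cap A)$: a point $x \in \T$ lies in the closed $s$--panel of some chamber $\psi$ epicormic at $\T$; picking an apartment $A'$ containing $\phi$ and $\psi$ (any two chambers of $X$ lie in a common apartment), this closed panel lies in the wall $\T \cap A'$, so $x \in \T \cap A'$ and $\rho(x) = \rho|_{A'}(x) \in \T \cap A$. For the reverse inclusion: a point $x$ with $\rho(x) \in \T \cap A$ lies in some closed chamber $\overline{\psi_0}$ of $X$; picking an apartment $A'$ containing $\phi$ and $\psi_0$ we get $x \in A'$ and $\rho|_{A'}(x) = \rho(x) \in \T \cap A$, whence $x \in (\rho|_{A'})^{-1}(\T \cap A) = \T \cap A' \subseteq \T$. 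I do not anticipate a real obstacle: the whole content is \fullref{l:sperp epicormic}\eqref{i:wall separates} together with the standard properties of $\rho$, and the only point needing care is the identification of $\T \cap A'$ with the unique type-$s$ wall of $A'$ through the $s$--panel of $\phi$, which is precisely what forces $\rho|_{A'}$ to respect it.
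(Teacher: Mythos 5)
Your proof is correct, but it takes a genuinely different route from the paper's. The paper argues chamber-by-chamber using the group $\sperp$: for a chamber $\psi$ of $A$ epicormic at $\T$ on the same side as $\phi$, the element $w=\delta_W(\phi,\psi)$ lies in $\sperp$ by the proof of \fullref{l:sperp epicormic}; since the retraction preserves $W$--distance from its centre, any $\psi'\in\rho^{-1}(\psi)$ also satisfies $\delta_W(\phi,\psi')=w\in\sperp$, and the first half of that same proof (run in an apartment containing $\phi$ and $\psi'$) then shows $\psi'$ is epicormic at $\T$, with its $s$--panel mapping onto that of $\psi$. You instead work apartment-by-apartment: you identify $\T\cap A'$ and $\T\cap A$ as the unique walls through the $s$--panel of $\phi$ in $A'$ and $A$ respectively (using \fullref{l:sperp epicormic}\eqref{i:wall separates} plus the standard fact that a panel of a Coxeter complex lies in exactly one wall), and then use that $\rho|_{A'}$ is a type-preserving isomorphism fixing that panel, hence matches the two walls. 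Your version buys a cleaner logical structure --- it invokes only the \emph{statement} of \fullref{l:sperp epicormic}\eqref{i:wall separates} and general facts about retractions and walls, whereas the paper twice appeals to the internal workings of the proof of \fullref{l:sperp epicormic} --- at the cost of importing the wall-rigidity of type-preserving isomorphisms of Coxeter complexes, which the paper's argument does not need. The final set-theoretic bookkeeping (choosing an apartment through $\phi$ and the relevant chamber in each direction) is sound.
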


\begin{proof} Let $\psi$ be any chamber of $A$ which is  epicormic at $\T$ and is in the same component of $A - \T \cap A$ as $\phi$.  Then by the proof of  \fullref{l:sperp epicormic} above, $w := \delta_W(\phi,\psi)$ is in $\sperp$.  
Let $\psi'$ be a chamber in the preimage $\rho^{-1}(\psi)$ and let $A'$ be an apartment containing both $\phi$ and $\psi'$.  Since the retraction $\rho$ preserves $W$--distances from $\phi$, we have that $\delta_W(\phi,\psi') = w$ is in $\sperp$.  Again by the proof of \fullref{l:sperp epicormic}, it follows that the chamber $\psi'$ is epicormic at $\T$.  But the image under $\rho$ of the $s$--panel of $\psi'$ is the $s$--panel of $\psi$.  Thus $\rho^{-1}(\T \cap A) = \T$, as required.
\end{proof}

 \begin{lemma}\label{l:galleries crossing} Let $\T$ be a tree-wall and let $\phi$ and $\phi'$ be two chambers of $X$.  Let $\alpha$ be a minimal gallery from $\phi$ to $\phi'$ and let $\beta$ be any gallery from $\phi$ to $\phi'$. If $\alpha$ crosses $\T$ then $\beta$ crosses $\T$.
\end{lemma}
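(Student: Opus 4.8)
The plan is to retract the building onto an apartment and translate ``crossing $\T$'' into a statement about a convex residue of a Coxeter complex, which can then be settled using the gate property of residues.

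First I would fix a chamber $\phi_i$ on $\alpha$ that is epicormic at $\T$, which exists precisely because $\alpha$ crosses $\T$, together with the epicormic chamber $\phi_{i+1}$ adjacent to it on $\alpha$; let $s$ be the type of $\T$. Any minimal gallery is contained in an apartment, so choose an apartment $A$ containing $\alpha$; then $\phi$, $\phi'$, $\phi_i$ and $\phi_{i+1}$ all lie in $A$. Since $\phi_i$ is epicormic at $\T$, \fullref{l:sperp epicormic}\eqref{i:wall separates} shows that $M := \T \cap A$ is a wall of $A$. Let $\rho := \rho_{\phi_i,A}\co X \to A$ be the retraction centred at $\phi_i$. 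As a type-preserving simplicial retraction, $\rho$ fixes $A$ pointwise, carries any gallery of $X$ to a (possibly stammering) gallery of $A$, and carries the $s$--panel of a chamber onto the $s$--panel of its image; and by \fullref{c:inverse image}, $\rho^{-1}(M) = \T$. Combining these, a chamber $\chi$ of $X$ is epicormic at $\T$ if and only if $\rho(\chi)$ is. It therefore suffices to show that the image gallery $\rho(\beta)$, which runs in $A$ from $\phi$ to $\phi'$, has two consecutive chambers epicormic at $\T$; pulling such a pair back along $\rho$ shows that $\beta$ crosses $\T$ (the pulled-back chambers are distinct and hence genuinely adjacent in $X$).

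Next I would identify the epicormic chambers inside the Coxeter complex $A$. By \fullref{l:sperp epicormic}\eqref{i:word in s perp}, applied on both sides of $M$ (using $\phi_i$ on one side and the chamber $s$--adjacent to $\phi_i$ across its $s$--panel on the other), the chambers of $A$ epicormic at $\T$ are exactly those $\chi$ with $\delta_W(\phi_i,\chi) \in \langle s \rangle \times \sperp = W_J$, where $J = \{s\} \cup s^\perp$; that is, they form the chamber set of the residue $R$ of $A$ of type $J$ containing $\phi_i$, and $M$ is the wall of type $s$ straddled by $R$. Everything now takes place in $A$: I must prove that if some minimal gallery from $\phi$ to $\phi'$ has two consecutive chambers in $R$, then every gallery from $\phi$ to $\phi'$ does. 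For this I would use two standard facts about the gate projection $\mathrm{proj}_R\co \Ch(A) \to \Ch(R)$: a minimal gallery from a chamber $\chi$ to any chamber of $R$ passes through $\mathrm{proj}_R(\chi)$; and, as a consequence, along any gallery the value of $\mathrm{proj}_R$ changes only at a step whose two chambers both lie in $R$ (and are then distinct). The residue $R$ is convex, so the minimal gallery $\alpha$ meets $R$ in a single contiguous block; this block contains the adjacent pair $\phi_i,\phi_{i+1}$, hence has at least two chambers, while the two stretches of $\alpha$ lying outside $R$ contain no edge of $R$, so $\mathrm{proj}_R$ is constant along each of them. Therefore $\mathrm{proj}_R(\phi)$ and $\mathrm{proj}_R(\phi')$ are the initial and final chambers of the block, and in particular are \emph{distinct}. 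Given any gallery $\beta$ from $\phi$ to $\phi'$, the gallery $\mathrm{proj}_R(\rho(\beta))$ runs in $R$ between these two distinct chambers, hence is non-constant, hence by the second fact $\rho(\beta)$ has two consecutive chambers in $R$, i.e.\ two consecutive chambers epicormic at $\T$, completing the argument.

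Minimality of $\alpha$ enters exactly at the convexity step: a non-minimal gallery may enter and leave $R$ several times and so need not detect the gates of its endpoints. The part I expect to require the most care is the reduction in the third paragraph. Because a gallery may ``cross $\T$'' through a step whose two chambers are epicormic but only \emph{perpendicularly} adjacent (of some type $t \neq s$ with $m_{st} = 2$) rather than $s$--adjacent across $M$, one cannot argue merely that $\phi$ and $\phi'$ lie on opposite sides of $M$ --- indeed they may lie on the same side --- and it is essential to work with the whole residue $R$ rather than just the wall $M$. Verifying that the epicormic chambers of $A$ are precisely the chambers of a single $J$--residue, and that $\mathrm{proj}_R$ behaves as claimed, is where the real work lies; once that is in place the remainder is routine.
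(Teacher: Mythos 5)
Your opening moves (pass to an apartment $A$ containing $\alpha$, retract via $\rho=\rho_{\phi_i,A}$, use \fullref{c:inverse image} to see that a chamber is epicormic at $\T$ iff its image is, and identify the epicormic chambers of $A$ with the residue $R$ of type $J=\{s\}\cup s^\perp$ containing $\phi_i$) are all correct and agree with the paper's strategy. The argument breaks at the ``second fact'' about gate projections: it is \emph{not} true that along a gallery the value of $\mathrm{proj}_R$ changes only at a step whose two chambers both lie in $R$. What is true is that if $\mathrm{proj}_R(C)\neq\mathrm{proj}_R(D)$ for adjacent $C,D$, then the unique wall separating $C$ from $D$ is a wall of $R$; but $C$ and $D$ themselves need not belong to $R$. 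Concretely, in the Coxeter complex of $W_3$ take $s=s_1$, so $J=\{s_1,s_2,s_p\}$ and $R=W_J$ (the residue of the chamber $1$): the chambers $s_3$ and $s_3s_2$ are $s_2$--adjacent, neither lies in $R$, yet $\mathrm{proj}_R(s_3)=1$ while $\mathrm{proj}_R(s_3s_2)=s_2$ (since $s_2s_3s_2=s_3$ has length $1$). So a gallery can move between the fibres of $\mathrm{proj}_R$ without ever using an edge of $R$, and your final step fails.

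This gap cannot be patched, because under the literal reading of ``crosses'' that you adopt the lemma is actually false. With $\T$ the tree-wall of type $s_1$ through the $s_1$--panel of the chamber $1$ in $W_3$, the minimal gallery $(1,s_2)$ has both chambers epicormic at $\T$, but the gallery $(1,\;s_3,\;s_3s_2,\;s_2)$ between the same endpoints contains no consecutive pair of epicormic chambers. The resolution is that the paper's definition of ``crosses'', though stated loosely, is used throughout in the stronger sense that the two consecutive chambers share an \emph{$s$--panel lying in $\T$}: see the proof of \fullref{l:sperp epicormic}, where a gallery all of whose chambers are epicormic is declared not to cross $\T$ ``since none of the $s_i$ are equal to $s$'', and the proof of \fullref{p:tree-wall}, where crossing must force a gallery through $\T$ itself. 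With that reading the perpendicular steps you were worried about are not crossings at all, the wall $M=\T\cap A$ genuinely separates $\phi$ from $\phi'$ (a minimal gallery meets a wall at most once), and the proof closes in one line: $\rho(\beta)$ is a gallery in $A$ from $\phi$ to $\phi'$, hence must cross $M$, and $\rho^{-1}(M)=\T$ pulls this back to a crossing of $\T$ by $\beta$. That is the paper's proof; the residue $R$ and $\mathrm{proj}_R$ are not needed.
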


\begin{proof}  Suppose that $\alpha$ crosses $\T$.  Since $\alpha$ is minimal, there is an apartment $A$ of $X$ which contains $\alpha$, and hence the wall $\T \cap A$ separates $\phi$ from $\phi'$.  Choose a chamber $\phi_0$ of $A$ which is epicormic at $\T$ and consider the retraction $ \rho = \rho_{\phi_0,A}$ onto $A$ centered at $\phi_0$.
Since $\phi$ and $\phi'$ are in $A$, $\rho$ fixes $\phi$ and $\phi'$.  Hence $\rho(\beta)$ is a gallery in $A$ from $\phi$ to $\phi'$, and so $\rho(\beta)$ crosses $\T\cap A$.  By \fullref{c:inverse image} above, $\rho^{-1}(\T \cap A) = \T$.  Therefore  $\beta$ crosses $\T$. \end{proof}

\begin{proposition}\label{p:tree-wall} Let $\T$ be a tree-wall of type $s$.  Then $\T$ separates $X$ into $q_s$ gallery-connected components.
\end{proposition}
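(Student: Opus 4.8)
The plan is to fix a tree-wall $\T$ of type $s$, pick a chamber $\phi_0$ epicormic at $\T$, and use the retraction $\rho = \rho_{\phi_0,A}$ onto an apartment $A$ containing $\phi_0$ together with \fullref{c:inverse image}. First I would verify that $\T$ does indeed disconnect $\Ch(X)$: if $\psi, \psi'$ are chambers on ``opposite sides'' of $\T$ in the sense that every minimal gallery between them crosses $\T$, then by \fullref{l:galleries crossing} \emph{every} gallery from $\psi$ to $\psi'$ crosses $\T$, so $\psi$ and $\psi'$ lie in distinct gallery-connected components of $X - \T$. Thus the gallery-connected components of $X - \T$ are exactly the fibres of the relation ``$\psi \sim \psi'$ iff some (equivalently, by \fullref{l:galleries crossing}, every minimal) gallery from $\psi$ to $\psi'$ does not cross $\T$.''

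Next I would count these components. Since $\rho$ preserves $W$-distance from $\phi_0$ and $\rho^{-1}(\T \cap A) = \T$ by \fullref{c:inverse image}, a gallery $\beta$ in $X$ from $\psi$ to $\psi'$ crosses $\T$ if and only if its image $\rho(\beta)$ crosses $\T \cap A$; in particular $\rho$ carries gallery-connected components of $X - \T$ into gallery-connected components of $A - (\T \cap A)$. Now $\T \cap A$ is a wall of the apartment $A$ by \fullref{l:sperp epicormic}\eqref{i:wall separates}, so it separates $A$ into exactly two components. The point, however, is that inside each of these two half-apartments the chambers epicormic at $\T$ on a \emph{fixed} $s$-panel are all gallery-connected to one another within $X - \T$ — moving between the $q_s$ chambers sharing a single $s$-panel $P \subset \T$ requires crossing the tree-wall containing $P$, which is $\T$ itself; but adjacent chambers epicormic at $\T$ and on the same $s$-panel are, by the definition of ``crosses $\T$'', related by a gallery that... here one must be careful: two chambers sharing an $s$-panel in $\T$ are epicormic at $\T$ and $s$-adjacent, so the one-step gallery between them crosses $\T$. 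Hence the $q_s$ chambers on a single $s$-panel of $\T$ represent (potentially) $q_s$ distinct components.

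The heart of the argument, then, is the following two claims: (a) if $\psi, \psi'$ are chambers of $X$ with $\rho(\psi)$ and $\rho(\psi')$ on the same side of $\T \cap A$, and the $s$-panels of $\psi$ and $\psi'$ map under $\rho$ into the same $s$-panel of $\T \cap A$, then $\psi$ and $\psi'$ are in the same component of $X - \T$; and (b) conversely, chambers whose $s$-panels project to distinct $s$-panels of $\T \cap A$, or to the same $s$-panel but on opposite sides, lie in distinct components. For (b) one uses \fullref{l:galleries crossing} and \fullref{c:inverse image} as above: any gallery realizing a path in $X - \T$ projects to a gallery in $A$ avoiding $\T \cap A$, which cannot move between the two sides of the wall nor (being confined to one half-apartment, hence to a ``halfspace'') change which chamber on a panel of $\T$ is adjacent. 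For (a), given $\psi$ I would retract to find a chamber epicormic at $\T$ ``below'' $\psi$ (the $s$-panel of $\psi$ lies in $\T$, and $\psi$ is on one specific side), use \fullref{l:sperp epicormic}\eqref{i:word in s perp} to connect it within that side of $A$ — lifting via $\rho$ to stay in $X - \T$ — and observe that the $q_s$ chambers containing a fixed $s$-panel $P \subset \T$ are acted on simply transitively by the structure, so exactly one of them lies over each ``sheet'': the upshot is that each of the $q_s$ chambers on a fixed $s$-panel $P$ of $\T$ is gallery-connected in $X - \T$ to a chosen base chamber of its sheet, and there are precisely $q_s$ sheets.

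The main obstacle I anticipate is organizing claim (a) cleanly — showing that all chambers of $X$ in a given one of the $q_s$ components actually do connect up in $X - \T$, since $X$ is not an apartment and one has to move through many apartments; the retraction $\rho$ plus \fullref{c:inverse image} handles ``does a gallery cross $\T$,'' but producing the connecting galleries requires lifting paths from $A$ back to $X$ and checking they remain on the correct side, which is where the simple transitivity of $H_s$ on the chambers of an $s$-panel together with \fullref{l:sperp epicormic}\eqref{i:word in s perp} does the work. Once (a) and (b) are in place, the count is immediate: the components of $X - \T$ are in bijection with (a side of $\T \cap A$) $\times$ (which chamber on the panel), but the ``side'' is absorbed because an $s$-panel $P \subset \T$ has one chamber on it per side already accounted for, giving exactly $q_s$ gallery-connected components.
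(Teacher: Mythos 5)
Your overall strategy---projecting everything onto a single apartment $A$ via $\rho = \rho_{\phi_0,A}$ and reading off the components of $X - \T$ from the two half-apartments of $A - (\T\cap A)$ together with ``which chamber on the panel''---cannot work as organized, and both of your key claims are false as stated. The retraction collapses too much: if $P$ is an $s$--panel of $\T\cap A$ and $\mu,\nu$ are the two chambers of $A$ containing $P$ (with $\mu$ the gate of $\phi_0$), then $\rho$ sends all $q_s-1$ chambers of $X$ on $P$ other than $\mu$ to the single chamber $\nu$. Two distinct such chambers $\psi\neq\psi'$ satisfy every hypothesis of your claim (a) (same image panel, same side, indeed $\rho(\psi)=\rho(\psi')$), yet they lie in distinct components of $X-\T$, since the one-step gallery between them is minimal and crosses $\T$, so by \fullref{l:galleries crossing} every gallery between them crosses $\T$. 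Claim (b) fails in the other direction whenever $s^\perp\neq\emptyset$: two chambers epicormic at $\T$ that are $t$--adjacent for some $t\in s^\perp$ have distinct $s$--panels in $\T$, so their panels project to distinct panels of $\T\cap A$, but their common panel is a $t$--panel not contained in $\T$, so the one-step gallery between them does not cross $\T$ and they lie in the \emph{same} component. Finally, the ``sheets'' you invoke to absorb the discrepancy in the count are never defined except, implicitly, as the very components you are trying to enumerate, so that step is circular.

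What your outline is missing is an argument for the upper bound that does not factor through $\rho$: one must exhibit, for an arbitrary chamber $\phi$ of $X$, a gallery from $\phi$ to one of the $q_s$ chambers $\phi_1,\dots,\phi_{q_s}$ on a single fixed $s$--panel of $\T$ that does not cross $\T$. The paper does this by taking $\phi_1$ to be the unique chamber among the $\phi_i$ at minimal gallery distance from $\phi$ (the gate of $\phi$ in that panel), choosing a minimal gallery $\alpha$ from $\phi$ to $\phi_1$ and an apartment $A$ containing it, and applying the Exchange Condition: $A$ contains some $\phi_i$ with $i>1$ and $d_W(\phi,\phi_1)<d_W(\phi,\phi_i)$, so the concatenation of $\alpha$ with $(\phi_1,\phi_i)$ is again minimal; since a minimal gallery crosses the wall $\T\cap A$ at most once and its last step already does so, $\alpha$ itself does not cross $\T$. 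Your lower-bound argument (the $q_s$ chambers on one panel lie in pairwise distinct components, via \fullref{l:galleries crossing}) is essentially the paper's and is fine; it is the ``at most $q_s$ components'' half that your proposal does not establish.
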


\begin{proof} Fix an $s$--panel in $\T$ and let  $\phi_1,\ldots,\phi_{q_s}$ be the $q_s$ chambers containing this panel. Then  for all $1 \leq i < j \leq q_s$, the minimal gallery from $\phi_i$ to $\phi_j$ is just $(\phi_i,\phi_j)$, and hence crosses $\T$.  Thus by \fullref{l:galleries crossing} above, any  gallery from $\phi_i$ to $\phi_j$ crosses $\T$.  So the $q_s$ chambers $\phi_1,\ldots,\phi_{q_s}$ lie in $q_s$ distinct components of $X - \T$.

To complete the proof, we show that $\T$ separates $X$ into at most $q_s$ components.  Let $\phi$ be any chamber of $X$.  Then among the chambers $\phi_1,\ldots,\phi_{q_s}$, there is a unique chamber, say $\phi_1$, at minimal gallery distance from $\phi$.   It suffices to show that $\phi$ and $\phi_1$ are in the same component of $X - \T$.

Let $\alpha$ be a minimal gallery from $\phi$ to $\phi_1$ and let $A$ be an apartment
containing $\alpha$.  Then there is a unique chamber of $A$ which is $s$--adjacent to $\phi_1$.  Hence $A$ contains $\phi_i$ for some $i > 1$, and  the wall $\T \cap A$ separates $\phi_1$ from $\phi_i$.  Since $\alpha$ is minimal and $d_W(\phi,\phi_1) < d_W(\phi,\phi_i)$, the Exchange Condition (see \cite[page 35]{D}) implies that a minimal gallery from $\phi$ to $\phi_i$ may be obtained by concatenating $\alpha$ with the gallery $(\phi_1,\phi_i)$. Since a minimal gallery can cross $\T \cap A$ at most once, $\alpha$ does not cross $\T \cap A$.  Thus  $\phi$ and $\phi_1$ are in the same component of $X - \T$, as required.
\end{proof}

\section{Proof of Theorem}\label{s:proof}

Let $G$ be as in the introduction and let $\Gamma$ be a non-cocompact lattice in $G$ with strict fundamental domain.  Fix a chamber $\phi_0$ of $X$.  For each integer $n \geq 0$ define \[D(n):=\{\,\phi \in \Ch(X) \mid d_W(\phi,\Gamma \phi_0) \leq n \,\}.\]  Then $D(0)=\Gamma \phi_0$, and for every $n > 0$ every connected  component of $D(n)$ contains a chamber in $\Gamma \phi_0$.  To prove \fullref{t:strict implies not fg}, we will show that there is no $n > 0$ such that $D(n)$ is connected.

Let $Y$ be a strict fundamental domain for $\Gamma$ which contains $\phi_0$.  For each chamber $\phi$ of $X$, denote by $\phi_Y$ the representative of $\phi$ in $Y$.

\begin{lemma}\label{l:projection preserves adjacency} Let $\phi$ and $\phi'$ be $t$--adjacent chambers in $X$, for $t \in S$.  Then either $\phi_Y = \phi'_Y$, or $\phi_Y$ and $\phi'_Y$ are $t$--adjacent.
\end{lemma}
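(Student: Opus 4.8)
The plan is to work with the $\Gamma$-action directly. Since $Y$ is a strict fundamental domain, for every chamber $\psi$ of $X$ there is a unique element, call it $\gamma_\psi \in \Gamma$, with $\gamma_\psi \psi \in Y$; thus $\psi_Y = \gamma_\psi \psi$. First I would set up notation: write $\phi_Y = \gamma \phi$ and $\phi'_Y = \gamma' \phi'$ with $\gamma,\gamma' \in \Gamma$. The goal is to compare $\gamma\phi$ and $\gamma'\phi'$, both of which lie in $Y$.

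The key idea is to apply the \emph{same} group element $\gamma$ to $\phi'$ and see where it lands. Since $\phi$ and $\phi'$ are $t$-adjacent and $\gamma$ is a type-preserving automorphism of $X$, the chambers $\gamma\phi$ and $\gamma\phi'$ are again $t$-adjacent. Now $\gamma\phi' $ is some chamber of $X$, so it has a representative $(\gamma\phi')_Y$ in $Y$; by uniqueness of representatives this representative equals $\phi'_Y = \gamma'\phi'$. Next I would invoke the fact that $Y$, being a strict fundamental domain, meets each $\Gamma$-orbit on $X$ in exactly one point, and in particular meets each $\Gamma$-orbit on the \emph{panels} (or on the cells) of $X$ in exactly one cell — so the panel $\gamma\phi \cap \gamma\phi'$ (the image under $\gamma$ of the common $t$-panel of $\phi,\phi'$) lies in $Y$, because it is a face of the chamber $\gamma\phi \in Y$ and $Y$ is a subcomplex. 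Then $\gamma\phi$ is one of the chambers of $X$ containing this panel, and $\gamma\phi$ lies in $Y$. The chamber $\gamma'\phi'$ also contains the $\Gamma$-translate of this panel that lies in $Y$ — but that translate is the panel itself, since the panel is already in $Y$. So $\gamma\phi$ and $\gamma'\phi'$ both lie in $Y$ and both contain the common panel $\gamma\phi \cap \gamma\phi'$; hence they are either equal or $t$-adjacent, which is exactly the claim.

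In more detail, the chain of reasoning I would carry out is: (1) the $t$-panel $P = \phi \cap \phi'$ has a unique representative $P_Y$ in $Y$, and since $\gamma \phi \in Y$ contains the panel $\gamma P$, which is a $\Gamma$-translate of $P$ lying in the subcomplex $Y$, we get $\gamma P = P_Y$; similarly $\gamma' P = P_Y$, so $\gamma P = \gamma' P$; (2) therefore $\gamma\phi$ and $\gamma'\phi'$ are two chambers of $Y$ sharing the $t$-panel $P_Y$; (3) two chambers sharing a $t$-panel are either equal or $t$-adjacent. The conclusion $\phi_Y = \phi'_Y$ or ``$\phi_Y, \phi'_Y$ are $t$-adjacent'' follows immediately.

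The main obstacle I anticipate is handling the panel representatives cleanly — making sure that ``$Y$ is a strict fundamental domain'' is being used in the right form, namely that $Y$ contains exactly one point of each $\Gamma$-orbit on $X$ (so in particular exactly one cell of each $\Gamma$-orbit of cells, since $Y$ is a subcomplex), and that type-preservation of $\Gamma$ guarantees $\gamma P$ is again a $t$-panel and $\gamma\phi,\gamma\phi'$ remain $t$-adjacent. One should also be slightly careful that the ``unique representative'' statement applies to panels and not just chambers; this is where the subcomplex hypothesis is essential, since a face of a chamber in $Y$ must itself be in $Y$. Once that bookkeeping is in place, the argument is short and purely combinatorial.
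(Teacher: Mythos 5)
Your argument is correct and is essentially the paper's own proof: both hinge on the observation that the common $t$--panel of $\phi$ and $\phi'$ has a unique representative in the subcomplex $Y$, which must therefore be the $t$--panel of both $\phi_Y$ and $\phi'_Y$. One tiny inaccuracy: the element $\gamma_\psi$ with $\gamma_\psi\psi\in Y$ need not be unique (only the representative chamber $\psi_Y$ is), but this does not affect the argument.
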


\begin{proof}
 It suffices to show that the $t$--panel of $\phi_Y$ is the $t$--panel of $\phi'_Y$.  Since $Y$ is a subcomplex of $X$, the $t$--panel of $\phi_Y$ is contained in $Y$.   By definition of a strict fundamental domain, there is exactly one representative in $Y$ of the $t$--panel of $\phi$.  Hence the unique representative in $Y$ of the $t$--panel of $\phi$ is the $t$--panel of $\phi_Y$.  Similarly, the unique representative in $Y$ of the $t$--panel of $\phi'$ is the $t$--panel of $\phi'_Y$.  But $\phi$ and $\phi'$ are $t$--adjacent, hence have the same $t$--panel, and so it follows that $\phi_Y$ and $\phi'_Y$ have the same $t$--panel.
\end{proof}

\begin{corollary}\label{c:gallery-connected} The fundamental domain $Y$ is gallery-connected.  \end{corollary}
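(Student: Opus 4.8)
The plan is to deduce gallery-connectedness of $Y$ from that of $X$ by pushing galleries forward under the retraction-like map $\phi \mapsto \phi_Y$, using \fullref{l:projection preserves adjacency} to control what happens to adjacency. Concretely, I would argue as follows. Let $\psi$ and $\psi'$ be any two chambers of $Y$. Since $X$ is a building it is gallery-connected, so there is a gallery $\alpha = (\phi_0, \phi_1, \ldots, \phi_n)$ in $\Ch(X)$ with $\phi_0 = \psi$ and $\phi_n = \psi'$. For each $i$ write $(\phi_i)_Y$ for the representative of $\phi_i$ in $Y$; by definition each $(\phi_i)_Y \in \Ch(Y)$.

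The key step is to observe that $\bigl((\phi_0)_Y, (\phi_1)_Y, \ldots, (\phi_n)_Y\bigr)$ is a ``gallery with possible repeats'' inside $Y$: for each $i$ the chambers $\phi_i$ and $\phi_{i+1}$ are $t$--adjacent for some $t \in S$, so by \fullref{l:projection preserves adjacency} either $(\phi_i)_Y = (\phi_{i+1})_Y$ or $(\phi_i)_Y$ and $(\phi_{i+1})_Y$ are $t$--adjacent. Deleting every index $i$ for which $(\phi_i)_Y = (\phi_{i+1})_Y$ then yields an honest gallery in $Y$. Finally, since $\psi$ and $\psi'$ already lie in $Y$, uniqueness of representatives gives $(\phi_0)_Y = \psi_Y = \psi$ and $(\phi_n)_Y = \psi'_Y = \psi'$, so this gallery runs from $\psi$ to $\psi'$. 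As $\psi, \psi'$ were arbitrary, $Y$ is gallery-connected.

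I do not anticipate a serious obstacle here: the corollary is essentially immediate from \fullref{l:projection preserves adjacency} together with gallery-connectedness of $X$. The only point requiring a word of care is the meaning of a ``gallery in $Y$'': since $Y$ is a subcomplex of $X$ which is a union of chambers, two chambers of $Y$ that are $t$--adjacent in $X$ share a $t$--panel lying in $Y$, so they are $t$--adjacent ``within $Y$'' as well; thus the pushed-forward gallery genuinely lies in the chamber system of $Y$ and no compatibility issue arises.
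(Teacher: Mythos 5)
Your argument is correct and is exactly the intended one: the paper states this as an immediate corollary of \fullref{l:projection preserves adjacency} without writing out the details, and the details are precisely your push-forward of a gallery in $X$ followed by deletion of repeats. No issues.
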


\begin{lemma}\label{l:transverse gallery} For all $n > 0$, the fundamental domain $Y$ contains a pair of adjacent chambers $\phi_n$ and $\phi'_n$ such that, if $\T_n$ denotes the tree-wall separating $\phi_n$ from $\phi'_n$:
\begin{enumerate}
\item\label{i:phi_0 and phi_n} the chambers $\phi_0$ and $\phi_n$ are in the same gallery-connected component of $Y - \T_n \cap Y$;
\item\label{i:distance to T_n} $\min \{ d_W(\phi_0,\phi) \mid \phi \in \Ch(X) \mbox{ is epicormic at $\T_n$} \} > n$; and
\item\label{i:stabilizer} there is a $\gamma \in \Stab_\Gamma(\phi'_n)$ which does not fix $\phi_n$.
\end{enumerate}
\end{lemma}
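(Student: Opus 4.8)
The proof will combine three facts. First, since $\Gamma$ is non-cocompact, $Y$ has infinitely many chambers. Second, since $\Gamma$ is a lattice the series $\sum_{\phi\in\Ch(Y)}|\Stab_\Gamma(\phi)|^{-1}$ converges, so for each $N\in\mathbb{N}$ only finitely many chambers of $Y$ have $|\Stab_\Gamma(\phi)|\le N$. Third, $X$ is locally finite, so the ball $\{\phi\in\Ch(X)\mid d_W(\phi_0,\phi)\le n\}$ is finite; hence only finitely many tree-walls contain a panel of one of its chambers, and these are precisely the tree-walls $\T$ having an epicormic chamber within gallery-distance $n$ of $\phi_0$. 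Call these the \emph{bad} tree-walls; then \eqref{i:distance to T_n} says exactly that $\T_n$ is not bad. Also, \eqref{i:stabilizer} is automatic once $|\Stab_\Gamma(\phi_n)|<|\Stab_\Gamma(\phi'_n)|$, since then $\Stab_\Gamma(\phi_n)\cap\Stab_\Gamma(\phi'_n)$ is a proper subgroup of $\Stab_\Gamma(\phi'_n)$, so some $\gamma\in\Stab_\Gamma(\phi'_n)$ does not lie in $\Stab_\Gamma(\phi_n)$, i.e.\ does not fix $\phi_n$. So it remains to exhibit adjacent chambers $\phi_n,\phi'_n$ of $Y$ with $|\Stab_\Gamma(\phi_n)|<|\Stab_\Gamma(\phi'_n)|$, with $\T_n$ not bad, and satisfying \eqref{i:phi_0 and phi_n}.

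The plan is to choose a chamber $\psi$ of $Y$ with both $d_W(\phi_0,\psi)$ and $|\Stab_\Gamma(\psi)|$ very large — available by the first two facts — and then to walk inward from $\phi_0$ towards $\psi$. Using \fullref{c:gallery-connected}, take a gallery in $Y$ from $\phi_0$ to $\psi$; along it $|\Stab_\Gamma(\cdot)|$ grows from a bounded value to an enormous one, so there is a step $(\phi_n,\phi'_n)$, with $\phi_n$ the earlier chamber, at which it strictly increases. I would choose such a step as far out as possible, so that the type-$t$ tree-wall $\T_n$ carrying the panel $\phi_n\cap\phi'_n$ is not one of the finitely many bad tree-walls, and so that moreover $d_W(\phi_0,\phi_n)<d_W(\phi_0,\phi'_n)$. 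Granted that last inequality, \eqref{i:phi_0 and phi_n} follows from the Exchange Condition argument already used to prove \fullref{p:tree-wall}: a minimal gallery from $\phi_0$ to $\phi'_n$ may be taken to have $(\phi_n,\phi'_n)$ as its final step, and since a minimal gallery crosses a tree-wall at most once, a minimal gallery from $\phi_0$ to $\phi_n$ does not cross $\T_n$; thus $\phi_0$ and $\phi_n$ lie in a common component of $X-\T_n$, and the strict-fundamental-domain structure — \fullref{l:projection preserves adjacency} together with \fullref{c:inverse image} and \fullref{l:galleries crossing} — transfers this to a common component of $Y-\T_n\cap Y$.

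I expect the main difficulty to be making this selection so that \eqref{i:phi_0 and phi_n}, \eqref{i:distance to T_n} and $|\Stab_\Gamma(\phi_n)|<|\Stab_\Gamma(\phi'_n)|$ hold simultaneously. The delicate point is \eqref{i:distance to T_n}: a bad tree-wall of infinite type (the case \eqref{i:infinite} of \fullref{c:tree wall trichotomy}) has epicormic chambers arbitrarily far from $\phi_0$, so it is not enough simply to push $\phi_n$ far from $\phi_0$. My plan for this is to use the fact, implicit in the proof of \fullref{p:tree-wall}, that the tree-wall carrying the last panel of a minimal gallery from $\phi_0$ to $\psi$ separates $\phi_0$ from $\psi$ and has $\psi$ epicormic at it: choosing $\psi$ deep enough to lie beyond all of the finitely many bad tree-walls forces this particular tree-wall to be non-bad. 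I would then synchronise this choice with the stabilizer walk above — enlarging $\psi$ and its stabilizer if necessary, and reading off the last strict increase of $|\Stab_\Gamma(\cdot)|$ before a prescribed deep region — to obtain all three required properties at once. The input that makes this precise is a description, via \fullref{l:sperp epicormic} and \fullref{c:tree wall trichotomy}, of how the epicormic chambers of a single tree-wall are distributed, which shows that the deep region reached in this way is non-empty and reachable within $Y$.
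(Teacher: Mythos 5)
Your overall strategy is the same as the paper's: both arguments rest on (a) local finiteness forcing only finitely many ``bad'' tree-walls, i.e.\ tree-walls with an epicormic chamber in the ball of radius $n$ about $\phi_0$; (b) the lattice condition forcing all but finitely many chambers of $Y$ to have large stabilizers; (c) gallery-connectedness of $Y$; and (d) the reduction of condition \eqref{i:stabilizer} to the inequality $|\Stab_\Gamma(\phi_n)| < |\Stab_\Gamma(\phi'_n)|$. Your treatment of \eqref{i:phi_0 and phi_n} via minimal galleries crossing each tree-wall once is also in the spirit of the paper, which gets it by taking the first crossing of $\T_n$ along a gallery inside $Y$. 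The difference in ordering is worth noting: the paper first fixes a non-bad tree-wall $\T_n$ meeting $Y$ and then selects the adjacent pair on it, whereas you try to produce the tree-wall as a by-product of a stabilizer-increasing step along a gallery.

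That difference is where your argument has a genuine gap, and it sits exactly at the point you flag as ``the main difficulty.'' Choosing the stabilizer-increasing step ``as far out as possible'' does not make the crossed tree-wall non-bad: when some $\sperp$ is infinite (case \eqref{i:infinite} of \fullref{c:tree wall trichotomy}), a bad tree-wall has panels, and hence crossing steps, arbitrarily far from $\phi_0$, so the last strict increase of $|\Stab_\Gamma(\cdot)|$ along your gallery may perfectly well occur across a bad tree-wall. Your proposed repair does not close this: a chamber $\psi$ with $d_W(\phi_0,\psi)$ arbitrarily large can still be epicormic at a bad infinite tree-wall $\T$ and separated from $\phi_0$ by it, in which case the tree-wall carried by the last panel of a minimal gallery from $\phi_0$ to $\psi$ can be $\T$ itself; ``depth'' of $\psi$ is measured radially, while badness of a tree-wall is not a radial condition. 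Moreover, even when the last-crossed tree-wall is non-bad, nothing forces the stabilizer to increase strictly at that particular step rather than at an earlier, bad one. (A secondary issue: your ``transfer'' of the same-component statement from $X - \T_n$ to $Y - \T_n \cap Y$ via \fullref{l:projection preserves adjacency} is not automatic either, since the $Y$-projection of a gallery avoiding $\T_n$ could a priori cross $\T_n$, the group $\Gamma$ being free to permute tree-walls; the paper avoids this by working with a gallery that lies in $Y$ from the start and taking its first crossing of $\T_n$.) To be fair, the paper's own proof is brief at the same juncture --- it asserts that $\T_n$, $\phi_n$, $\phi'_n$ ``may be chosen'' with $|\Stab_\Gamma(\phi_n)| < |\Stab_\Gamma(\phi'_n)|$ --- but it at least decouples the two choices; your proposal as written ties them together by a mechanism that fails in the infinite tree-wall case, so the simultaneous verification of \eqref{i:distance to T_n} and \eqref{i:stabilizer} remains unproved.
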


\begin{proof} Fix $n > 0$.  Since $\Gamma$ is not cocompact, $Y$ is not compact.  Thus there exists a tree-wall $\T_n$ with $\T_n \cap Y$ nonempty such that for every $\phi \in \Ch(X)$ which is epicormic at $\T_n$, $d_W(\phi_0,\phi) > n$.  Let $s_n$ be the type of the tree-wall $\T_n$.  Then by \fullref{c:gallery-connected} above, there is a chamber $\phi_n$ of $Y$ which is epicormic at $\T_n$ and in the same gallery-connected component of $Y - \T_n \cap Y$ as $\phi_0$, such that for some chamber $\phi'_n$ which is $s_n$--adjacent to $\phi_n$, $\phi'_n$ is also in $Y$.
Now, as $\Gamma$ is a non-cocompact lattice, the orders of the $\Gamma$--stabilizers of the chambers in $Y$ are unbounded.  Hence the tree-wall $\T_n$ and chambers $\phi_n$ and $\phi'_n$ may be chosen so that $|\Stab_{\Gamma}(\phi_n)| < |\Stab_\Gamma(\phi'_n)|$.
\end{proof}

Let $\phi_n$, $\phi'_n$, $\T_n$, and $\gamma$ be as in \fullref{l:transverse gallery} above and let $s=s_n$ be the type of the tree-wall $\T_n$.  Let $\alpha$ be a gallery in $Y - \T_n \cap Y$ from $\phi_0$ to $\phi_n$.  The chambers $\phi_n$ and $\gamma \cdot \phi_n$ are in two distinct components of $X - \T_n$, since they both contain the $s$--panel $\phi_n \cap \phi'_n \subseteq \T_n$, which is fixed by $\gamma$.    Hence the galleries $\alpha$ and $\gamma \cdot \alpha$ are in two distinct components of $X - \T_n$, and so the chambers $\phi_0$ and $\gamma \cdot \phi_0$ are in two distinct components of $X - \T_n$.
Denote by $X_0$ the component of $X - \T_n$ which contains $\phi_0$, and put $Y_0 = Y \cap X_0$.

\begin{lemma}\label{l:epiy}
Let $\phi$ be a chamber in $X_0$ that is epicormic at $\T_n$.  Then $\phi_Y$ is in $Y_0$ and is epicormic at $\T_n \cap Y$.
\end{lemma}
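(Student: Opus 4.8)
Write $s=s_n$ for the type of $\T_n$. The plan is to compare $\phi$ with the chamber $\phi_n$ rather than with $\phi_0$: by construction $\phi_n$ lies in $Y$, is epicormic at $\T_n$, and lies in $X_0$ (it is in the same component of $X-\T_n$ as $\phi_0$, by \fullref{l:transverse gallery}). First I would check that $w:=\delta_W(\phi_n,\phi)$ lies in $\sperp$. Pick an apartment $A$ containing both $\phi_n$ and $\phi$. Both chambers are epicormic at $\T_n$, and since both lie in $X_0$ they lie on the same side of the wall $\T_n\cap A$: otherwise a minimal gallery between them inside $A$ would cross $\T_n$, so by \fullref{l:galleries crossing} every gallery between them would cross $\T_n$, contradicting $\phi_n,\phi\in X_0$. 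The argument in the proof of \fullref{l:sperp epicormic}, with $\phi_n$ in the role of ``$\phi$'' and $\phi$ in the role of ``$\psi$'', then gives $w\in\sperp$.

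Next, fix a reduced expression $s_1\cdots s_m$ for $w$; as in the proof of \fullref{l:sperp epicormic}, each $s_i$ lies in $s^\perp$, so $s_i\neq s$ and $m_{s_i s}=2$. Let $\alpha=(\phi_n=\psi_0,\psi_1,\ldots,\psi_m=\phi)$ be the minimal gallery of type $(s_1,\ldots,s_m)$, and apply $\psi\mapsto\psi_Y$ to each term. By \fullref{l:projection preserves adjacency}, deleting repetitions yields a gallery in $Y$ from $(\phi_n)_Y=\phi_n$ to $\phi_Y$ whose consecutive distinct chambers $(\psi_i)_Y$ and $(\psi_{i+1})_Y$ are $s_{i+1}$--adjacent. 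I then claim, by induction on $i$, that every $(\psi_i)_Y$ is epicormic at $\T_n$: the base case is $(\psi_0)_Y=\phi_n$, and in the inductive step $(\psi_{i+1})_Y$ is either equal to $(\psi_i)_Y$ or $s_{i+1}$--adjacent to it with $s_{i+1}\neq s$ and $m_{s_{i+1} s}=2$, so the remark preceding \fullref{l:sperp epicormic} shows $(\psi_{i+1})_Y$ is again epicormic at $\T_n$. In particular $\phi_Y=(\psi_m)_Y$ is epicormic at $\T_n$, and since $\phi_Y\in Y$ its $s$--panel lies in $Y$, hence in $\T_n\cap Y$; thus $\phi_Y$ is epicormic at $\T_n\cap Y$, the second assertion.

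For the first assertion, the gallery $((\psi_0)_Y,\ldots,(\psi_m)_Y)$ runs from $\phi_n$ to $\phi_Y$ and each of its nondegenerate steps is $s_{i+1}$--adjacent with $s_{i+1}\neq s$, so it does not cross $\T_n$. Hence $\phi_Y$ lies in the same component of $X-\T_n$ as $\phi_n$, namely $X_0$, and therefore $\phi_Y\in Y\cap X_0=Y_0$.

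The step I expect to be the main obstacle is establishing $w=\delta_W(\phi_n,\phi)\in\sperp$, since it is exactly this fact that keeps the projected gallery epicormic at $\T_n$ and free of $s$--adjacent steps. Projecting an arbitrary gallery from $\phi_0$ to $\phi$ through $Y$ would not suffice, because a general $s$--adjacent step could have its panel carried into or out of $\T_n$ by an element of $\Gamma$, and one would lose control both of whether $\phi_Y$ is epicormic and of which component of $X-\T_n$ it lies in. Everything after that first point is a direct transcription of the induction already carried out in the proof of \fullref{l:sperp epicormic}.
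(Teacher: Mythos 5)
Your argument is correct, and at its core it runs on the same engine as the paper's: connect $\phi$ to the known chamber $\phi_n$ by a gallery all of whose steps have type in $s^\perp$, and push that gallery into $Y$ one step at a time using \fullref{l:projection preserves adjacency}, noting that each projected step of type $t\in s^\perp$ preserves both epicormicity at $\T_n$ and the side of $\T_n$. The difference is organizational: the paper splits into the three cases of \fullref{c:tree wall trichotomy} and, in the infinite case, inducts on the gallery distance from $\phi$ to the nearest epicormic chamber of $Y_0$, whereas you give a single uniform argument by first establishing $w=\delta_W(\phi_n,\phi)\in\sperp$ (via the surjectivity half of the proof of \fullref{l:sperp epicormic}, applied in an apartment containing $\phi_n$ and $\phi$) and then projecting the minimal gallery of type a reduced word for $w$. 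Your version subsumes the vertex case automatically ($\sperp$ trivial forces $w=1$, so $\phi=\phi_n$) and is in fact more complete than the paper's two-line sketch; the one point worth making explicit is the standard Coxeter-group fact that every reduced expression for an element of the parabolic $\sperp$ uses only letters of $s^\perp$, which is what guarantees that no step of your gallery is of type $s$. Your closing remark is also well taken: projecting an arbitrary gallery from $\phi_0$ would not work, and identifying $w\in\sperp$ is exactly the right control.
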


\begin{proof}
We consider three cases, corresponding to the possibilities for tree-walls in \fullref{c:tree wall trichotomy} above.
\begin{enumerate}
\item If $\T_n$ is reduced to a vertex, there is only one chamber in $X_0$ which is epicormic at $\T_n$, namely $\phi_n$.  Thus $\phi = \phi_n = \phi_Y$ and we are done.
\item If $\T_n$ is finite but not reduced to a vertex, the result follows by finitely many applications of \fullref{l:projection preserves adjacency} above.
\item  If $\T_n$ is infinite, the result follows by induction, using \fullref{l:projection preserves adjacency} above,  on
\[ k:= \min \{ d_W(\phi,\psi) \mid \psi \mbox{ is a chamber of $Y_0$ epicormic at $\T_n \cap Y$}\}. \proved \] 
\end{enumerate}\end{proof}

\begin{lemma}\label{l:final}  For all $n > 0$, the complex $D(n)$ is not connected.
\end{lemma}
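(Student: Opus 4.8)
The plan is to argue by contradiction, using the tree-wall $\T_n$, the chambers $\phi_n,\phi'_n$ and the element $\gamma$ produced by \fullref{l:transverse gallery}, together with the fact established just above that $\phi_0$ and $\gamma\phi_0$ lie in distinct components of $X-\T_n$. Suppose $D(n)$ is connected. Both $\phi_0$ and $\gamma\phi_0$ belong to $D(n)$, since each is at gallery distance $0$ from $\Gamma\phi_0$, so there is a gallery $\beta=(\psi_0,\dots,\psi_m)$ contained in $D(n)$ with $\psi_0=\phi_0$ and $\psi_m=\gamma\phi_0$. As $\psi_0\in X_0$ while $\psi_m$ lies in a different component of $X-\T_n$, there is a least index $i$ with $\psi_i\notin X_0$; then $\psi_{i-1}\in X_0$, the chambers $\psi_{i-1}$ and $\psi_i$ are adjacent and lie in distinct components of $X-\T_n$, and therefore the one-step gallery $(\psi_{i-1},\psi_i)$ must cross $\T_n$, since a gallery that does not cross $\T_n$ keeps both endpoints in one component of $X-\T_n$ (cf.\ \fullref{p:tree-wall}). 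In particular $\psi_{i-1}$ is epicormic at $\T_n$, so $\psi_{i-1}$ is a chamber of $X_0$ that is epicormic at $\T_n$ and belongs to $D(n)$. It therefore suffices to prove that no chamber of $X_0$ which is epicormic at $\T_n$ can lie in $D(n)$.

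The key point is that for every chamber $\chi$ of $Y$ and every $\gamma'\in\Gamma$ one has $d_W(\chi,\phi_0)\le d_W(\chi,\gamma'\phi_0)$, and hence $d_W(\chi,\Gamma\phi_0)=d_W(\chi,\phi_0)$. To see this I would take a minimal gallery $(\eta_0,\dots,\eta_k)$ from $\chi$ to $\gamma'\phi_0$ and replace each $\eta_j$ by its representative $(\eta_j)_Y$ in $Y$. By \fullref{l:projection preserves adjacency}, consecutive terms $(\eta_j)_Y$ and $(\eta_{j+1})_Y$ are equal or adjacent, while $(\eta_0)_Y=\chi_Y=\chi$ (as $\chi\in Y$) and $(\eta_k)_Y=(\gamma'\phi_0)_Y=\phi_0$ (as $\phi_0\in Y$ represents that orbit); deleting repetitions from $((\eta_0)_Y,\dots,(\eta_k)_Y)$ yields a gallery in $Y$ from $\chi$ to $\phi_0$ of length at most $k=d_W(\chi,\gamma'\phi_0)$.

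Now let $\phi$ be a chamber of $X_0$ that is epicormic at $\T_n$. By \fullref{l:epiy}, $\phi_Y$ lies in $Y_0\subseteq Y$ and is epicormic at $\T_n\cap Y$; in particular $\phi_Y$ is a chamber of $Y$ that is epicormic at $\T_n$. Since $\phi$ and $\phi_Y$ lie in the same $\Gamma$--orbit, $d_W(\phi,\Gamma\phi_0)=d_W(\phi_Y,\Gamma\phi_0)$, which by the previous paragraph equals $d_W(\phi_Y,\phi_0)$, and this is $>n$ by part \eqref{i:distance to T_n} of \fullref{l:transverse gallery}, because $\phi_Y$ is epicormic at $\T_n$. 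Hence $\phi\notin D(n)$, which establishes the claim of the first paragraph and shows that $D(n)$ is not connected.

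The main obstacle is the middle step: part \eqref{i:distance to T_n} of \fullref{l:transverse gallery} only controls the gallery distance from the single chamber $\phi_0$, whereas membership in $D(n)$ is governed by distance from the whole orbit $\Gamma\phi_0$. Bridging this gap is exactly where the strict fundamental domain hypothesis enters, through the distance--nonincreasing projection $\phi\mapsto\phi_Y$ supplied by \fullref{l:projection preserves adjacency}. The remaining ingredients — locating a crossing of $\T_n$ inside $\beta$ and passing between "crosses $\T_n$" and "epicormic at $\T_n$" — are routine consequences of \fullref{p:tree-wall}, \fullref{l:epiy}, and the definitions.
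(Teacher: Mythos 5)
Your proof is correct and follows essentially the same route as the paper's: project galleries into $Y$ chamber-by-chamber via \fullref{l:projection preserves adjacency}, use \fullref{l:epiy} to see the projected endpoint is epicormic at $\T_n$, and invoke part \eqref{i:distance to T_n} of \fullref{l:transverse gallery} to force length $>n$. You merely arrange it as a contradiction and spell out two steps the paper leaves implicit (extracting an epicormic chamber of $X_0\cap D(n)$ from a hypothetical connecting gallery, and the identity $d_W(\chi,\Gamma\phi_0)=d_W(\chi,\phi_0)$ for $\chi\in\Ch(Y)$), which is a reasonable amount of added care rather than a different idea.
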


\begin{proof} Fix $n > 0$, and let $\alpha$ be a gallery in $X$  between a chamber in $X_0 \cap \Gamma \phi_0$ and some chamber $\phi$ in $X_0$ that is epicormic at $\T_n$.  Let $m$ be the length of $\alpha$.

By \fullref{l:projection preserves adjacency} and \fullref{l:epiy} above, the gallery $\alpha$ projects to a gallery $\beta$ in $Y$ between $\phi _0$ and a chamber $\phi_Y$ that is epicormic at $\T_n \cap Y$.  The gallery $\beta$ in $Y$ has length at most $m$.

It follows from \eqref{i:distance to T_n} of \fullref{l:transverse gallery} above that the gallery $\beta$ in $Y$ has length greater than $n$. Therefore $m > n$.  Hence the gallery-connected component of $D(n)$ that contains $\phi _0$ is contained in $X_0$.  As the chamber $\gamma \cdot \phi_0$ is not in $X_0$, it follows that the complex $D(n)$ is not connected. \end{proof}

 This completes the proof, as $\Gamma$ is finitely generated if and only if $D(n)$ is connected for some $n$.

\end{document}